\documentclass[12pt]{amsart}
\usepackage{amsmath, amsthm, amssymb}
\usepackage[top=1.25in, bottom=1.25in, left=1.0in, right=1.0in]{geometry}

\allowdisplaybreaks
\pagestyle{headings}

\usepackage{tkz-graph}
\usetikzlibrary{arrows}
\usetikzlibrary{shapes}
\usepackage[position=bottom]{subfig}

\makeatletter
\newtheorem*{rep@theorem}{\rep@title}
\newcommand{\newreptheorem}[2]{
\newenvironment{rep#1}[1]{
 \def\rep@title{#2 \ref{##1}}
 \begin{rep@theorem}}
 {\end{rep@theorem}}}
\makeatother

\theoremstyle{plain}
\newtheorem{thm}{Theorem}[section]
\newreptheorem{thm}{Theorem}

\newreptheorem{prop}{Proposition}
\newtheorem{lem}[thm]{Lemma}
\newreptheorem{lem}{Lemma}

\newreptheorem{conjecture}{Conjecture}
\newtheorem{cor}[thm]{Corollary}
\newreptheorem{cor}{Corollary}

\theoremstyle{definition}

\theoremstyle{remark}

\newtheorem*{FixerMove}{\bf {Fixer's turn}}
\newtheorem*{BreakerMove}{\bf {Breaker's turn}}

\newcommand{\fancy}[1]{\mathcal{#1}}

\newcommand{\IN}{\mathbb{N}}

\newcommand{\inj}{\hookrightarrow}

\newcommand{\set}[1]{\left\{ #1 \right\}}
\newcommand{\setb}[3]{\left\{ #1 \in #2 \mid #3 \right\}}
\newcommand{\setbs}[2]{\left\{ #1 \mid #2 \right\}}
\newcommand{\card}[1]{\left|#1\right|}

\newcommand{\floor}[1]{\left\lfloor#1\right\rfloor}
\newcommand{\func}[3]{#1\colon #2 \rightarrow #3}
\newcommand{\funcinj}[3]{#1\colon #2 \inj #3}

\newcommand{\irange}[1]{\left[#1\right]}

\newcommand{\parens}[1]{\left( #1 \right)}

\newcommand{\DefinedAs}{\mathrel{\mathop:}=}
\newcommand{\im}{\operatorname{im}}

\renewcommand{\S}{\fancy{S}}
\newcommand{\W}{\fancy{W}}

\renewcommand{\P}{\fancy{P}}

\newcommand{\F}{\mathfrak{F}}
\newcommand{\B}{\mathfrak{B}}

\title{A game generalizing Hall's theorem}
\author{Landon Rabern}

\begin{document}
\begin{abstract}
We characterize the initial positions from which the first player has a
winning strategy in a certain two-player game.  This provides a generalization
of Hall's theorem.  Vizing's edge coloring theorem follows from a special case.
\end{abstract}
\maketitle

\section{Introduction}
A $\emph{set system}$ is a finite family of finite sets. A \emph{transversal} of a set system $\S$ is an injection $\funcinj{f}{\S}{\bigcup \S}$ such that $f(S) \in S$ for each $S \in \S$.  Hall's theorem \cite{hall} gives the precise conditions under which a set system has a transversal.

\begin{thm}[Hall \cite{hall}]
A set system $\S$ has a transversal iff $\card{\bigcup \W} \geq \card{\W}$ for each $\W
\subseteq \S$.
\end{thm}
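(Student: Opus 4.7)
The plan is to handle the two directions separately. The forward direction I would dispatch immediately: if $\funcinj{f}{\S}{\bigcup \S}$ is a transversal and $\W \subseteq \S$, the restriction $f|_{\W}$ injects $\W$ into $\bigcup \W$, so $\card{\bigcup \W} \geq \card{\W}$. For the converse I would induct on $\card{\S}$; the base case $\card{\S} = 1$ is immediate since Hall's condition forces the single set to be non-empty. For the inductive step, I would split on whether Hall's condition is strict for every non-empty proper subfamily of $\S$.

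In the \emph{strict case} --- where $\card{\bigcup \W} > \card{\W}$ for every non-empty $\W \subsetneq \S$ --- I would pick any $S \in \S$ and any $x \in S$ and form $\S' = \setbs{T \setminus \set{x}}{T \in \S \setminus \set{S}}$. Deleting one element can decrease each set-union by at most one, which is absorbed by the strict slack, so $\S'$ still satisfies Hall's condition; induction produces a transversal of $\S'$, which I extend by sending $S \mapsto x$.

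In the remaining \emph{critical case}, some non-empty proper $\W \subsetneq \S$ satisfies $\card{\bigcup \W} = \card{\W}$. Induction on $\W$ gives a transversal $f_1$ that must be a bijection onto $\bigcup \W$. I would then form $\S'' = \setbs{T \setminus \bigcup \W}{T \in \S \setminus \W}$; for any $\fancy{U} \subseteq \S \setminus \W$, Hall's condition on $\fancy{U} \cup \W$ yields $\card{\bigcup(\fancy{U} \cup \W)} \geq \card{\fancy{U}} + \card{\W}$, so after removing the $\card{\W}$ elements of $\bigcup \W$ at least $\card{\fancy{U}}$ remain, confirming Hall's condition for $\S''$. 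A second application of induction gives a transversal $f_2$ of $\S''$, and since its image avoids $\bigcup \W$, combining $f_1$ on $\W$ with $f_2$ on $\S \setminus \W$ is a transversal of $\S$.

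The hard part will be the dichotomy-style case split together with the verification of Hall's condition for the reduced system in each branch. In the strict case the slack is exactly what absorbs the loss of $x$; in the critical case the equality on $\W$ is exactly what forces $\S''$ to inherit Hall's condition. Neither sub-argument handles the inductive step by itself, so organizing the split around the existence of a tight subfamily is essential.
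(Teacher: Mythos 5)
The paper does not actually prove this statement: Hall's theorem is cited from \cite{hall} and then used as a known ingredient (in its bipartite-graph form) inside the proofs of Theorem \ref{MainTheorem} and Lemma \ref{Spanner}. So your argument is a standalone proof rather than something to measure against a proof in the text. As such, it is correct; it is the classical Halmos--Vaughan induction, organized exactly as you describe around whether some non-empty proper subfamily is tight. Both branches work: in the strict branch, $\card{\bigcup\fancy{U}} \geq \card{\fancy{U}} + 1$ for every non-empty $\fancy{U} \subseteq \S \setminus \set{S}$ is precisely what absorbs the deletion of the single element $x$; in the critical branch, $\card{\bigcup \W} = \card{\W}$ together with Hall's condition on $\fancy{U} \cup \W$ is precisely what hands Hall's condition down to $\S''$, and the image of $f_2$ is automatically disjoint from $\bigcup \W$. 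One thing worth making explicit: when you pass to $\S' = \setbs{T\setminus\set{x}}{T \in \S\setminus\set{S}}$ or $\S'' = \setbs{T\setminus \bigcup\W}{T\in\S\setminus\W}$, two distinct members of $\S$ can become equal as sets, which silently shrinks the new family and would break the cardinality bookkeeping if $\S$ were taken to be a set of distinct sets. You should state up front that $\S$ is an indexed family (equivalently, run the whole induction in the bipartite incidence graph); this matches how the paper itself invokes Hall's theorem and costs nothing, but without it the inductive step is not quite airtight.
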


We generalize this by analyzing winning strategies in a two-player game played on a set system  by \emph{Fixer} (henceforth dubbed $\F$) and \emph{Breaker}. Fixer wins this game iff he can modify the set system so that it has a transversal; otherwise, Breaker wins.   Additionally, when playing on the set system $\S$, we require a \emph{pot} $P$ with $\bigcup \S \subseteq P$. The first move is always $\F$'s and he can do the following.

\begin{FixerMove}
Pick $x \in P$ and $S \in \S$ with $x \not \in S$ and replace $S$ with $S
\cup \set{x} \smallsetminus \set{y}$ for some $y \in S$.
\end{FixerMove}

We need the notation $\irange{k} \DefinedAs \set{1, \ldots, k}$. For each $t \in \irange{\card{\S} - 1}$, we have a different rule for Breaker, let's refer to Breaker by $\B_t$ when he is playing with the following rule
for $t$.  

\begin{BreakerMove}
If $\F$ modified $S \in \S$ by inserting $x$ and removing $y$, $\B_t$ can pick
at most $t$ sets in $\S \smallsetminus \set{S}$ and modify them by
swapping $x$ for $y$ or $y$ for $x$.
\end{BreakerMove}

To state the main theorem we need a couple more pieces of notation.  Define the
\emph{degree} in $\W \subseteq \S$ of $x \in P$ as
\[d_{\W}(x) \DefinedAs \card{\setb{S}{\W}{x \in S}}.\]

\noindent Now define the \emph{$t$-value} of $\W \subseteq \S$ as
\[\nu_t(\W) \DefinedAs \sum_{x \in \bigcup \W} \floor{\frac{d_{\W}(x) -
1}{t + 1}}.\]

Intuitively, this measures how many new elements from the pot $\F$ can swap into the sets without $\B_t$ undoing the progress by effectively interchanging the names of the swapped elements. For instance, if $d_{\W}(x) \leq t + 1$ and $\F$ swaps $y$ in for $x$, then $\B_t$ can swap $y$ in for all the other $x$'s since there are at most $t$ of them.  In this case $x$ contributes nothing to the $t$-value of $\W$.  Our main theorem shows that this intuition is correct.

\begin{thm}\label{MainTheorem}
In a set system $\S$ with $\bigcup \S \subseteq P$ and $\card{P} \ge \card{\S}$, $\F$ has a winning strategy against $\B_t$ iff $\card{\bigcup \W} \geq \card{\W} - \nu_t(\W)$ for each $\W \subseteq \S$.
\end{thm}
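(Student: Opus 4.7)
The plan is to prove the two directions separately, handling necessity by an obstruction/invariant argument and sufficiency by an explicit $\F$-strategy driven by a monovariant.

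\textbf{Necessity.} I would argue the contrapositive: suppose some $\W \subseteq \S$ witnesses $\card{\bigcup \W} < \card{\W} - \nu_t(\W)$, and describe a $\B_t$-strategy that preserves (at every game state) the existence of such a witness. Since any such $\W$ in particular forces $\card{\bigcup \W} < \card{\W}$, Hall's theorem then guarantees that the current set system never admits a transversal, so $\F$ can never win. For $\B_t$'s response when $\F$ swaps $y$ in for $x$ inside some $T \in \W$: if $d_\W(x) \leq t+1$, $\B_t$ uses his swaps to replace $x$ by $y$ in every other $\W$-set containing $x$, effectively renaming $x$ as $y$ inside $\W$; if $d_\W(x) > t+1$, $\B_t$ makes the same $t$ swaps, which lowers $x$'s contribution to $\nu_t(\W)$ by exactly $1$. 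For $T \notin \W$, $\B_t$ leaves $\W$ alone. Verifying that the deficit $\card{\W} - \nu_t(\W) - \card{\bigcup \W}$ does not shrink after such a round reduces to a case analysis on whether $y \in \bigcup \W$ before the move and on the precise change in the floor-sum $\nu_t$.

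\textbf{Sufficiency.} Assuming the generalized Hall condition holds for $\S$, I would give an explicit $\F$-strategy together with a monovariant. If the current system already has a transversal, $\F$ has won; otherwise Hall's theorem produces a violating $\W^* \subseteq \S$ with $\card{\bigcup \W^*} < \card{\W^*}$, and the generalized Hall condition (which I will have to show is maintained) forces $\nu_t(\W^*) \geq 1$, so some $x \in \bigcup \W^*$ has $d_{\W^*}(x) \geq t+2$. Choose $S \in \W^*$ with $x \in S$ and $y \in P \setminus \bigcup \W^*$ (which exists since $\card{P} \geq \card{\S} > \card{\bigcup \W^*}$); $\F$ swaps $y$ in for $x$ in $S$. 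Even against $\B_t$'s worst response, $x$ still appears in at least one set of $\W^*$ (because $d_{\W^*}(x) - 1 - t \geq 1$) while $y$ is now in $\bigcup \W^*$, so $\card{\bigcup \W^*}$ strictly increases and the Hall deficit of $\W^*$ drops by $1$. A global potential such as $\max_{\W \subseteq \S}\bigl(\card{\W} - \card{\bigcup \W}\bigr)$, or an aggregate over a cleverly chosen family of tight witnesses, should then be shown to decrease, forcing the game to end in a transversal-having state.

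\textbf{Main obstacle.} The trickiest part on both sides is the bookkeeping for $\nu_t$ under a full swap-response round. On the necessity side, the bad case is when $y$ was already in $\bigcup \W$: here $\card{\bigcup \W}$ drops by $1$ while $\nu_t(\W)$ can rise (as $y$'s degree in $\W$ grows), and preserving the invariant may require either switching to a different witness $\W' \subseteq \S$ or bounding the net effect by a careful partition of the elements of $\bigcup \W$ according to their degree mod $t+1$. On the sufficiency side, the harder task is not finding a locally progressive move but proving that $\F$'s move together with any $\B_t$ response preserves the generalized Hall condition \emph{globally}---across every $\W \subseteq \S$, not just $\W^*$---which I expect will either require inducting on $\card{\S}$ with a well-chosen reduction, or a structural analysis of how distinct Hall-obstructing subfamilies can overlap along the element $x$ being evicted.
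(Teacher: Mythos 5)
Your high-level plan matches the paper's (necessity via a $\B_t$-invariant, sufficiency via an explicit $\F$-strategy with a monovariant), and you correctly locate the two hard points. But the specific choices you propose have genuine gaps on both sides, and where you waved your hands is exactly where the paper's argument does something structurally different.

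\textbf{Necessity.} Your $\B_t$-rule keys off the degree of the element $\F$ \emph{removed}, but the paper's case split is driven by the degree of the element $\F$ \emph{inserted}. You correctly flag the bad case (inserted element already in $\bigcup\W$), but ``switching to a different witness'' or ``a careful partition mod $t+1$'' is not what resolves it. The paper handles it by observing that when $d_{\W}(x)>0$ the union can only shrink, so the invariant can fail only if $\nu_t$ strictly increases; and that forces $d_{\W_2}(x)-1$ to be a multiple of $t+1$ while $d_{\W_2}(y)$ is not. From this arithmetic constraint $\B_t$ has a forced repair: either swap one $y$ in for an $x$ to restore the status quo exactly, or swap $x$ in for $y$ in $\min\{t,\,d_{\W_2}(y)+1-d_{\W_2}(x)\}$ sets to strictly drop $\nu_t$. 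Without this observation, the sketch does not close.

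\textbf{Sufficiency.} Your choice of $y\in P\setminus\bigcup\W^*$ is too weak, and this is a real gap, not bookkeeping. If $y\in\bigcup\S\setminus\bigcup\W^*$, then after $\B_t$'s response some other witness $\W'$ containing $y$ can lose $x$ entirely while gaining nothing (since $y$ was already there), so $\card{\bigcup\W'}$ can \emph{decrease} and the generalized Hall condition can break for $\W'$. The paper avoids this by insisting on $y$ with $d_{\S}(y)=0$, which makes $\card{\bigcup\W}$ nondecreasing for \emph{every} $\W$ and is what makes the preservation argument go through. But such a $y$ is only guaranteed when $\card{\bigcup\S}<\card{\S}$. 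The missing piece is the paper's reduction for the complementary case: when $\card{\bigcup\S}\geq\card{\S}$, take a minimal $C\subseteq\bigcup\S$ with $\card{\W_C}\leq\card{C}$, get a matching $C\to\W_C$ by Hall, note this partial transversal uses only colors in $C$ and hence is immune to $\B_t$ once $\F$ restricts to the pot $P\setminus C$, and recurse on $\S\setminus\W_C$ (minimizing $\card{\S}$). Only after this reduction does the ``maximize $\card{\bigcup\S}$'' monovariant become available and well-behaved. Your proposed potential $\max_{\W}(\card{\W}-\card{\bigcup\W})$ is not shown to decrease (and plausibly does not, since repairing $\W^*$ can raise the deficit of a different $\W'$), whereas the paper's choice of $y$ makes $\card{\bigcup\S}$ strictly increase after one full round, which is the clean monovariant. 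So both the ``find a good $y$'' step and the ``Hall condition is preserved globally'' step are open in your sketch, and closing them essentially requires the double-extremality setup and the tight-set peeling lemma.
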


We can recover Hall's theorem from the $t = \card{\S} - 1$ case; to wit: $\B_t$
can remove all $y$'s in $\S$ rendering $\F$'s move equivalent to swapping the
names of $x$ and $y$, that is, rendering it useless. In Section
\ref{VizingSection} we show that Vizing's edge coloring theorem is a quick corollary of this result.  
In fact, the strategy employed by $\F$ is based, in part, on Ehrenfeucht, Faber and Kierstead's proof of Vizing's theorem
\cite{Ehrenfeucht1984159} and Schrijver's proof of Vizing's theorem
\cite{schrijver}.

\begin{cor}[Vizing \cite{vizing}]
Every simple graph satisfies $\chi' \leq \Delta + 1$.
\end{cor}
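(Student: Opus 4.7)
The plan is to induct on $\card{E(G)}$, with the edgeless case trivial. For the inductive step, remove an edge $e = xy$ and apply the inductive hypothesis to obtain a proper $(\Delta+1)$-edge-coloring $c$ of $G - e$. I would use Theorem \ref{MainTheorem} to extend $c$ to $e$ (possibly after recoloring other edges at $x$).

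Set up the game with the set system $\S \DefinedAs \set{S_f : f \ni x}$, where for each edge $f$ incident to $x$ with far endpoint $y_f$,
\[ S_f \DefinedAs \setb{\alpha}{[\Delta+1]}{\text{no edge } g \ne f \text{ incident to } y_f \text{ satisfies } c(g) = \alpha}.\]
Take $P \DefinedAs [\Delta+1]$ and $t = 1$. Since $\deg(y_f) \leq \Delta$, each $\card{S_f} \geq \Delta + 2 - \deg(y_f) \geq 2$, and $\card{P} = \Delta + 1 > \deg(x) = \card{\S}$, so the hypotheses of Theorem \ref{MainTheorem} are in place. A transversal of the (game-modified) $\S$ assigns each edge at $x$ a distinct color permissible at its far endpoint, which together with $c$ on the remaining edges is a proper $(\Delta+1)$-edge-coloring of $G$ extending $c$ to $e$.

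To invoke Theorem \ref{MainTheorem}, I verify $\card{\bigcup \W} \geq \card{\W} - \nu_1(\W)$ for each $\W \subseteq \S$. Using the identity $1 + \floor{(d-1)/2} = \ceil{d/2}$ valid for $d \geq 1$, together with double-counting the incidences $\set{(f,\alpha) : \alpha \in S_f, f \in \W}$, I compute
\[
 \card{\bigcup \W} + \nu_1(\W) = \sum_{\alpha \in \bigcup \W}\ceil{d_{\W}(\alpha)/2} \geq \frac{1}{2}\sum_{\alpha}d_{\W}(\alpha) = \frac{1}{2}\sum_{f \in \W}\card{S_f} \geq \card{\W},
\]
so the Hall-like condition holds and $\F$ has a winning strategy against $\B_1$.

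The main obstacle is reading this abstract winning strategy back as a genuine recoloring procedure on $G$: each $\F$-move should correspond to a single edge recoloring, $\B_1$'s one-set response should correspond to the forced one-step propagation of that recoloring (matching the two-color nature of Kempe exchanges, which is why $t = 1$ is the right choice), and the invariant that $\S$ tracks the actual available-color structure at $x$ under the current coloring must be preserved throughout play. Once this correspondence is established, the transversal guaranteed at the end of $\F$'s winning play is precisely the assignment of colors to edges at $x$ in the desired extension of $c$.
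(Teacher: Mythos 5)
Your setup and the Hall-type verification are both correct, and your computation $\card{\bigcup\W}+\nu_1(\W)=\sum_\alpha\ceil{d_\W(\alpha)/2}\geq\card{\W}$ is essentially a re-derivation of Corollary~\ref{Simple}, which the paper factors out as a separate statement. The paper's proof differs in the inductive setup: rather than delete a single edge $e=xy$ and build sets indexed by all edges at $x$, it takes a minimum counterexample, deletes an entire vertex $v$ of maximum degree, colors $G-v$, and indexes the sets by the neighbors $v_i$ of $v$. Both routes feed into the same machinery, and your edge-deletion version would also work.

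However, the part you flag as ``the main obstacle'' is a genuine gap, and it is exactly where the two setups diverge in difficulty. The correct correspondence is not ``one edge recoloring followed by a one-step propagation,'' but a single Kempe exchange: swap the colors $a,b$ along a maximal $(a,b)$-alternating path. Such a swap changes the set of missing colors only at the two endpoints of that path, which is precisely why it factors as one $\F$-move (at the starting endpoint) plus at most one $\B_1$-move (at the other endpoint, if it happens to be one of your indexing vertices). The subtlety your setup must handle, and the paper's avoids automatically, is the following: since $c$ lives on $G-e$, which still contains the other edges at $x$, an $(a,b)$-chain started at $y_f$ can run through $x$. If it does, it exchanges colors on two edges $f_1,f_2$ at $x$ and can alter \emph{both} $S_{f_1}$ and $S_{f_2}$ in addition to $S_f$, which $\B_1$ cannot model. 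You therefore must take the alternating path in $G-x$, not in $G-e$ or $G$. (Your definition of $S_f$ then cooperates: $a\in S_f$ means $a$ is absent from $y_f$ in $G-x$ even if $c(f)=a$, so $y_f$ really is a path endpoint in $G-x$.) In the paper's version this issue never arises because the coloring lives on $G-v$, so every chain already avoids $v$. Making the ``chains in $G-x$'' choice explicit, and checking that the induced map on the $S_f$'s is exactly an $\F$-move plus at most one $\B_1$-move, is what's needed to close your argument.
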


In Sections \ref{VizingMultiSection} and \ref{FanSection} we generalize the multicoloring version of
Hall's theorem and use this to give a non-standard proof of the following
result from which all of the classical edge coloring results follow as well as various ``adjacency lemmas'' (see \cite{stiebitz} for the standard proof and how these consequences are
derived).

\begin{cor}\label{FanCor}
Let $G$ be a multigraph satisfying $\chi' \geq \Delta +
1$. For each critical edge $xy$ in $G$, there exists $X
\subseteq N(x)$ with $y \in X$ and $\card{X} \geq 2$ such that

\[\sum_{v \in X} \parens{d(v) + u(xv) + 1 - \chi'(G)} \geq 2. \]
\end{cor}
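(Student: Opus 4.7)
The plan is to prove the contrapositive. Suppose $G$ is a multigraph with $\chi'(G) \geq \Delta + 1$ and $xy$ is a critical edge, and assume for contradiction that every $X \subseteq N(x)$ with $y \in X$ and $\card{X} \geq 2$ satisfies
\[\sum_{v \in X}\bigl(d(v) + u(xv) + 1 - \chi'(G)\bigr) \leq 1.\]
Set $k \DefinedAs \chi'(G) - 1 \geq \Delta$. Since $xy$ is critical, fix a proper $k$-edge-coloring $\phi$ of $G - xy$. The goal is to upgrade $\phi$ to a proper $k$-edge-coloring of all of $G$, contradicting $\chi'(G) = k+1$.

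I will apply the multicoloring refinement of Theorem \ref{MainTheorem} with $t = 1$ (as developed in Section \ref{VizingMultiSection}) to the set system $\S$ indexed by $N(x)$, where each $v \in N(x)$ contributes a set $S_v \subseteq [k]$ of multiplicity $u(xv)$. The set $S_v$ should record the ``admissible colors'' for the edges $xv$: the current $\phi$-colors of those edges together with the colors missing at $v$ under $\phi$, with $y$'s set slightly modified to account for the uncolored edge $xy$. Take $P \DefinedAs [k]$; since $\card{P} = k \geq \Delta \geq d(x)$, the cardinality hypothesis is met. A transversal of the set system produced at the end of the game corresponds to an extension of $\phi$ to the edges at $x$: an $\F$-move at $S_v$ encodes a Kempe-chain flip at $v$, while the single-set $\B_1$-response is exactly the forced recoloring at the unique other vertex touched by that chain.

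The crux is verifying the Hall-type hypothesis $\card{\bigcup \W} \geq \card{\W} - \nu_1(\W)$ for every $\W \subseteq \S$. Letting $X \DefinedAs \setbs{v \in N(x)}{S_v \in \W}$, small-$\card{X}$ boundary cases and the case $y \notin X$ should follow from direct bounds on $\card{S_v}$ using $d(v) \leq \Delta \leq k$. The remaining case $y \in X$, $\card{X} \geq 2$ should, after the right accounting, reduce to
\[\sum_{v \in X}\bigl(d(v) + u(xv) - k\bigr) \leq 1,\]
which is precisely the contrary hypothesis. The main obstacle will be pinning down $S_v$ so that $\card{\bigcup \W}$ computes to exactly the left-hand side of the Fan inequality, and so that $\nu_1(\W)$ captures, to the correct order, the colors appearing in several $S_v$'s --- matching the Kempe-chain slack afforded by the $t=1$ rule. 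Once the Hall-type condition is verified, Theorem \ref{MainTheorem} grants $\F$ a winning strategy, producing a transversal and thus the promised (contradictory) $k$-edge-coloring of $G$.
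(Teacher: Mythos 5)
Your proposal is essentially the paper's argument: same set system $S_v = M_v \cup D_v$ for $v \in N(x)$, same $\eta$-game against $\B_1$ with $\eta(S_v) = u(xv)$ and pot $P = \irange{k}$, same Kempe-chain reading of the moves, and the same double-counting (disjointness of the $D_v$ together with $\nu_1 = 0$ forcing each color into at most two $S_v$'s) to turn the failure of the Hall-type condition into the Fan inequality. The paper just phrases it directly (no winning strategy $\Rightarrow$ the Hall condition for the $\eta$-game fails for some $\S_X$ $\Rightarrow$ derive $y \in X$, $\card{X} \ge 2$, and the inequality) rather than by contrapositive, and you should write the Hall-type condition with $\sum_{W\in\W}\eta(W)$ rather than $\card{\W}$, but these are cosmetic.
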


\section{The proof}

\begin{proof}[Proof of Theorem \ref{MainTheorem}]
First we prove necessity of the condition. Suppose we have $\W \subseteq
\S$ with $\card{\bigcup \W} < \card{\W} - \nu_t(\W)$.  We show that no matter what moves
$\F$ makes, we can maintain this invariant.  In particular, we
will always have $\card{\bigcup \W} < \card{\W}$ and hence $\W$ can never have a
transversal. Suppose $\F$ modifies $S \in \S$ by inserting $x$ and removing $y$
to get $S'$.  Put $\W_2 \DefinedAs \W \cup \set{S'} \smallsetminus \set{S}$.
First, if $S \not \in \W$, then $\B_t$ does nothing. So, we may assume $S \in
\W$.  Suppose $d_{\W}(x) > 0$.  Then $\card{\bigcup \W_2} \leq \card{\bigcup \W}$.  So, if $\nu_t(\W_2) = \nu_t(\W)$, the invariant is maintained.  Otherwise, $d_{\W_2}(x) - 1$ must be a multiple of $t+1$ and $d_{\W_2}(y)$ must not be a multiple of $t+1$; in particular, $d_{\W_2}(y) \neq d_{\W_2}(x) - 1$.  If $d_{\W_2}(y) < d_{\W_2}(x) - 1$ then $\B_t$ swaps $y$ in for one $x$ in $\W_2 \smallsetminus \set{S'}$ restoring the status quo.  Otherwise, we must have $d_{\W_2}(y) > d_{\W_2}(x) - 1$ and $\B_t$ swaps $x$ in for $y$ in $\min\set{t, d_{\W_2}(y) + 1 - d_{\W_2}(x)}$ sets of $\W_2$, decreasing $\nu_t(\W_2)$.

So, we may assume that $d_{\W}(x) = 0$ and hence $\card{\bigcup \W_2} = \card{\bigcup \W} + 1$. 
Now $\B_t$ swaps $x$ in for $y$ in $\min\set{t, d_{\W_2}(y)}$ sets of $\W_2$ to form $\W_3$.  
If $d_{\W_2}(y) \leq t$, then $d_{W_3}(y) = 0$ and we have $\card{\bigcup \W_3}
= \card{\bigcup \W}$ and the invariant is maintained.  Otherwise $\nu_t(\W_3) <
\nu_t(\W)$ and again the invariant is maintained.

Now we prove sufficiency.  Suppose the condition is not sufficient for $\F$ to
have a winning strategy and choose a counterexample $\S$ first minimizing
$\card{\S}$ and subject to that maximizing $\card{\bigcup \S}$.  

First, suppose $\card{\bigcup \S} \geq \card{\S}$.
Choose $\emptyset \neq C \subseteq \bigcup \S$ minimal such that $\card{\W_C} \leq
\card{C}$ where $\W_C \DefinedAs \setb{S}{\S}{C \cap S \neq \emptyset}$ (we
can make this choice because $\bigcup \S$ is such a subset).  Create a bipartite graph
with parts $C$ and $\W_C$ and an edge from $x \in C$ to $S \in \W_C$ iff $x \in
S$. If $\card{C} = 1$, then we clearly have a matching of $C$ into $\W_C$.  Otherwise,
by minimality of $C$, for every $\emptyset \neq D \subset C$ we have
$\card{\W_D} > \card{D}$ and hence $\card{C} = \card{\W_C}$; now applying Hall's
theorem (for bipartite graphs) gives a matching of $C$ into $\W_C$.  This matching gives
a transversal $\funcinj{f}{\W_C}{\bigcup \W_C}$ with $\im(f) = C$.  Put $\S'
\DefinedAs \S \smallsetminus \W_C$ and $P' \DefinedAs P \smallsetminus C$. Then the conditions of the theorem are satisfied with $\S'$ and $P'$ and if $\F$ plays on $\S'$ and $P'$, 
$\B_t$ cannot destroy the transversal of $\W_C$ that exists using elements of $C$, 
even though $\B_t$ may play on all of $\S$ (though still restricted to $P'$). Whence minimality of $\card{\S}$ gives a contradiction.

Therefore we must have $\card{\bigcup \S} < \card{\S}$ and hence $\nu_t(\S) \geq
1$.  Since $\card{P} \geq \card{\S}$, we have $y \in P$ with $d_{\S}(y) = 0$.  So, we may choose $x \in P$
with $d_{\S}(x) \geq t + 2$. Now $\F$ should swap $y$ in for $x$ in some $S \in
\S$ to form $\S_2$.  Since $d_{\S}(y) = 0$, we have $\card{\bigcup \S_2} >
\card{\bigcup \S}$.  We also have $d_{\S_2}(x) \geq t + 1$.  Now $\B_t$ moves
and creates $\S_3$. Then $d_{\S_3}(x) \geq d_{\S_2}(x) - t > 0$, so we have 
$\card{\bigcup \S_3} > \card{\bigcup \S}$.  Suppose our modifications changed
some $\W \in \S$ so it now violates the hypotheses, let $\W_3$ be $\W$
after the two player's moves.  Then more precisely, we mean $\card{\bigcup \W_3} < \card{\W_3} - \nu_t(\W_3)$. 
Since $d_{\S}(y) = 0$, $\card{\bigcup \W_3} \geq \card{\bigcup \W}$.  Thus
$\nu_t(\W_3) < \nu_t(\W)$ and hence $d_{\W_3}(x) < d_{\W}(x)$.  But then
$d_{\W_3}(y) > 0$ and $\card{\bigcup \W_3} > \card{\bigcup \W}$, a
contradiction.  Therefore, $\S_3$ satisfies the hypotheses of the theorem and
hence $\F$ can win by maximality of $\card{\bigcup \S}$.
\end{proof}

\section{Vizing's theorem}\label{VizingSection}
\noindent Vizing's theorem follows from a very special case of Theorem
\ref{MainTheorem}.

\begin{cor}\label{Simple}
If $\S = \set{S_1, \ldots, S_k}$ with $\card{S_k} \geq 1$ and $\card{S_i} \geq
2$ for all $i \in \irange{k-1}$, then $\F$ has a winning strategy against
$\B_1$.
\end{cor}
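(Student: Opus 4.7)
The plan is to derive this corollary directly from Theorem~\ref{MainTheorem} specialized to $t = 1$. First I would choose any pot $P \supseteq \bigcup \S$ with $\card{P} \geq k$, padding $\bigcup \S$ with fresh dummy elements if necessary; enlarging $P$ does not change any $d_\W$ or $\nu_1$ value, so this costs nothing. What remains is to verify the Hall-like condition $\card{\bigcup \W} \geq \card{\W} - \nu_1(\W)$ for every $\W \subseteq \S$.

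For a fixed $\W \subseteq \S$, I would combine two elementary ingredients. The first is the incidence identity $\sum_{S \in \W} \card{S} = \sum_{x \in \bigcup \W} d_\W(x)$, which together with the size hypothesis forces $\sum_{S \in \W} \card{S} \geq 2\card{\W} - 1$, with the loss of $1$ only possible when $S_k \in \W$. The second is the pointwise inequality $\floor{(d-1)/2} \geq (d-2)/2$ valid for every integer $d \geq 1$; applied termwise in the definition of $\nu_1(\W)$ and summed over $x \in \bigcup \W$ it gives
\[ 2 \nu_1(\W) \;\geq\; \sum_{x \in \bigcup \W} \bigl( d_\W(x) - 2 \bigr) \;=\; \sum_{S \in \W} \card{S} \;-\; 2 \card{\bigcup \W}. \]

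Rearranging and feeding in the size bound yields
\[ \card{\bigcup \W} + \nu_1(\W) \;\geq\; \tfrac{1}{2}\sum_{S \in \W} \card{S} \;\geq\; \card{\W} - \tfrac{1}{2}. \]
Since the left side is an integer, this inequality rounds up to $\card{\bigcup \W} + \nu_1(\W) \geq \card{\W}$, which is precisely the hypothesis needed to invoke Theorem~\ref{MainTheorem}.

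I do not anticipate a serious obstacle; the argument is essentially bookkeeping organized around the floor estimate. The only spot that demands attention is the final integrality step, which is exactly what accommodates the one ``short'' set $S_k$ of size $\geq 1$: without using the fact that $\card{\bigcup \W} + \nu_1(\W) \in \IN$, the degree count would fall a half-unit shy of the target when $S_k \in \W$ and $\card{S_k} = 1$.
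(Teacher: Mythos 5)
Your proof is correct and follows essentially the same route as the paper's: bound $\nu_1(\W)$ from below via the floor estimate $\floor{(d-1)/2} \geq (d-2)/2$, rewrite the sum of degrees as $\sum_{S\in\W}\card{S}$, and invoke the size hypothesis. You simply make explicit two steps the paper leaves implicit: the choice of a sufficiently large pot, and the integrality argument that closes the half-unit gap when $S_k \in \W$.
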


\begin{proof}
Let $\W \subseteq \S$. Then $\nu_1(\W) \geq \sum_{x
\in \bigcup \W} \frac{d_{\W}(x) - 2}{2} = \frac12 \sum_{S \in \W} \card{S} -
\card{\bigcup \W} \geq \frac12 (2\card{\W} - 1) -
\card{\bigcup \W}$.  Hence $\nu_1(\W) \geq \card{\W} - \card{\bigcup \W}$ as
desired.
\end{proof}

\begin{proof}[Proof of Vizing's theorem]
Suppose not and let $G$ be a counterexample minimizing $\card{G}$. Put $\Delta
\DefinedAs \Delta(G)$.  Pick $v \in V(G)$ with
degree $\Delta$, say $v_1, \ldots, v_\Delta$ are the neighbors of $v$ in $G$. 
By minimality of $\card{G}$, we have a $(\Delta + 1)$-edge-coloring of $G -
v$. Let $S_i$ be the colors not incident with $v_i$ in this coloring. 
Each $v_i$ has degree at most $\Delta - 1$ in $G - v$ and hence
$\card{S_i} \geq 2$. Also, if $a \in S_i$ and $b \not \in S_i$ we
may exchange colors on a maximum length path starting at $v_i$ and alternating between colors $b$ and $a$. 
This gives an $\F$ move followed by a $\B_1$ move.  Apply Corollary \ref{Simple}
to get a transversal of the $S_i$. Now we may complete the $(\Delta +
1)$-edge-coloring to all of $G$ by using the corresponding element of the
transversal on $vv_i$ for each $i \in \irange{\Delta}$.
\end{proof}

\section{The multicoloring version}\label{VizingMultiSection}
To deal with edge coloring multigraphs, we need to generalize our game slightly.
Instead of looking for a transversal, we will look for a system of
disjoint representatives. For $\func{\eta}{\S}{\IN^+}$ an
\emph{$\eta$-transversal} of $\S$ is a function $\func{f}{\S}{\P\parens{\bigcup
\S}}$ such that $f(S) \subseteq S$, $\card{f(S)} = \eta(S)$ for $S \in \S$ and
$f(A) \cap f(B) = \emptyset$ for different $A,B \in \S$.  By making $\eta(S)$
copies of each $S \in \S$ and applying Hall's theorem we get the following.

\begin{thm}
A set system $\S$ has an $\eta$-transversal iff $\card{\bigcup \W} \geq \sum_{W \in \W}
\eta(W)$ for each $\W \subseteq \S$.
\end{thm}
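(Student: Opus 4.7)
The plan is to follow the author's hint and reduce to ordinary Hall's theorem by passing to a multiset (i.e.\ by duplicating sets). Formally, I would construct a new set system $\S^\ast$ that contains $\eta(S)$ labeled copies of each $S \in \S$. The intended correspondence is: a transversal $\funcinj{g}{\S^\ast}{\bigcup \S^\ast}$ induces a function $f$ on $\S$ by letting $f(S)$ be the set of values $g$ assigns to the $\eta(S)$ copies of $S$, and conversely each $\eta$-transversal $f$ of $\S$ yields a transversal of $\S^\ast$ by distributing the $\eta(S)$ elements of $f(S)$ among the copies of $S$. Under this correspondence $\eta$-transversals of $\S$ are in bijection (up to relabeling copies) with transversals of $\S^\ast$.

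Necessity is immediate: if $f$ is an $\eta$-transversal and $\W \subseteq \S$, then the sets $\{f(W) : W \in \W\}$ are pairwise disjoint subsets of $\bigcup \W$, so $\card{\bigcup \W} \geq \sum_{W \in \W} \card{f(W)} = \sum_{W \in \W} \eta(W)$.

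For sufficiency I would verify Hall's condition for $\S^\ast$. Given $\W^\ast \subseteq \S^\ast$, let $\W \subseteq \S$ be the set of distinct $S$ such that at least one copy of $S$ lies in $\W^\ast$. Then $\bigcup \W^\ast = \bigcup \W$, while $\card{\W^\ast} \leq \sum_{W \in \W} \eta(W)$, since $\W^\ast$ contains at most $\eta(W)$ copies of each $W \in \W$. The assumed inequality then gives $\card{\bigcup \W^\ast} = \card{\bigcup \W} \geq \sum_{W \in \W} \eta(W) \geq \card{\W^\ast}$, which is Hall's condition for $\S^\ast$. Applying Hall's theorem produces a transversal of $\S^\ast$, which under the correspondence above is the required $\eta$-transversal.

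There is no real obstacle here: both the bijective correspondence between transversals of $\S^\ast$ and $\eta$-transversals of $\S$ and the equivalence of the Hall-type conditions are routine. The only point to be careful about is that $\bigcup \W^\ast$ really does equal $\bigcup \W$ (copies of the same set contribute the same elements), and that the worst case of Hall's condition on $\S^\ast$ is realized by taking all copies of the sets in a chosen $\W$, which is precisely why the two conditions match.
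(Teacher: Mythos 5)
Your proposal is correct and is exactly the argument the paper has in mind: the paper simply remarks that one should "make $\eta(S)$ copies of each $S \in \S$ and apply Hall's theorem," and you have filled in the routine details of that reduction (the labeled-copy construction, the bijection between transversals of $\S^\ast$ and $\eta$-transversals of $\S$, and the equivalence of the Hall conditions via $\bigcup \W^\ast = \bigcup \W$).
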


Call the game where $\F$ wins iff he creates an $\eta$-transversal \emph{the
$\eta$-game}.  We can use the same idea of making $\eta(S)$
copies of each $S \in \S$  to get a multicoloring version of Theorem \ref{MainTheorem}.  First, we need a lemma.

\begin{lem}\label{Spanner}
Let $G$ be a bipartite graph with parts $X$ and $Y$. If $\func{\eta}{X}{\IN^+}$ and $\card{N_G(X)} \geq \sum_{x \in X} \eta(x)$, then $G$ has a subgraph $H$ such that $d_H(x) = \eta(x)$ for each $x \in X \cap V(H)$, $d_H(y) = 1$ for each $y \in Y \cap V(H)$ and $N_G(Y \cap V(H)) \subseteq V(H)$.
\end{lem}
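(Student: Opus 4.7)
The plan is induction on $\card{X}$, splitting on whether the full $\eta$-Hall condition $\card{N_G(A)} \ge \sum_{x \in A} \eta(x)$ holds for every $A \subseteq X$, or fails for some proper subset. The hypothesis only asserts the inequality at $A = X$, so both branches are genuine.

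If the $\eta$-Hall condition holds for every $A \subseteq X$, then (viewing each $x \in X$ as the set $N_G(x) \subseteq Y$) the multicoloring Hall's theorem stated immediately above produces an $\eta$-transversal $f$ with $f(x) \subseteq N_G(x)$ for all $x \in X$. Take $H$ to be the subgraph of $G$ with edge set $\set{xy : x \in X,\ y \in f(x)}$. Since $\eta(x) \geq 1$, we get $X \subseteq V(H)$ and $d_H(x) = \eta(x)$; disjointness of the images of $f$ forces $d_H(y) = 1$ for each $y \in Y \cap V(H)$; and trivially $N_G(Y \cap V(H)) \subseteq X \subseteq V(H)$.

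Otherwise, pick a nonempty $W \subsetneq X$ with $\card{N_G(W)} < \sum_{x \in W} \eta(x)$ (such a $W$ is nonempty because $N_G(\emptyset) = \emptyset$, and is a proper subset because the hypothesis rules out $W = X$). Set $X_1 \DefinedAs X \smallsetminus W$, $Y_1 \DefinedAs Y \smallsetminus N_G(W)$, and $G_1 \DefinedAs G[X_1 \cup Y_1]$. A direct check gives $N_{G_1}(X_1) = N_G(X) \smallsetminus N_G(W)$, so $\card{N_{G_1}(X_1)} = \card{N_G(X)} - \card{N_G(W)} > \sum_{x \in X_1} \eta(x)$; induction applied to $G_1$ (with $\eta$ restricted to $X_1$) then yields a subgraph $H_1$, and I take $H \DefinedAs H_1$. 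The main obstacle is confirming that the closure condition lifts from $G_1$ back to $G$: since $Y \cap V(H) \subseteq Y_1$ and by construction no vertex of $Y_1$ has a $G$-neighbor in $W$, the $G$- and $G_1$-neighborhoods of any $y \in Y \cap V(H)$ coincide, so $N_G(Y \cap V(H)) = N_{G_1}(Y \cap V(H)) \subseteq V(H_1) = V(H)$ by the inductive guarantee. The two degree conditions transfer verbatim from $H_1$ since $H = H_1$.
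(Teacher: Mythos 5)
Your proof is correct, but it takes a genuinely different route from the paper. The paper works directly with the blown-up bipartite graph $G'$ (with $\eta(x)$ copies of each $x$), chooses a \emph{minimal} nonempty $C \subseteq N_{G'}(X')$ with $\card{N_{G'}(C)} \leq \card{C}$, and uses minimality to force Hall's condition inside $C$, yielding a matching which it then contracts back to $G$; the construction is non-inductive and lives entirely on the $Y$-side. You instead induct on $\card{X}$ and split on whether the defect form of Hall's condition already holds on every $A \subseteq X$. When it does, you invoke the multicoloring Hall theorem stated just above the lemma (which has the blow-up hidden inside its proof) and get $X \subseteq V(H)$ outright; when it fails for some proper $\emptyset \neq W \subsetneq X$, you peel off $W$ together with $N_G(W)$ and recurse, and the key point you correctly identify and check is that vertices of $Y_1 = Y \smallsetminus N_G(W)$ have no $G$-neighbors in $W$, so the closure condition $N_G(Y \cap V(H)) \subseteq V(H)$ lifts verbatim from $G_1$ to $G$. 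Your bookkeeping in the second case ($N_{G_1}(X_1) = N_G(X) \smallsetminus N_G(W)$, the resulting strict inequality $\card{N_{G_1}(X_1)} > \sum_{x \in X_1}\eta(x)$, and the fact that $W$ is forced to be a proper nonempty subset) is all sound. In short: the paper locates a minimal tight set on the $Y$-side and applies Hall once; you locate a violating set on the $X$-side and recurse, outsourcing the Hall application to the already-stated multicoloring theorem. The paper's version is more self-contained and avoids explicit induction; yours is more modular and makes the ``peel off the obstruction'' structure of the argument visible. One tiny presentational point: when you ``view each $x$ as the set $N_G(x)$,'' distinct vertices with identical neighborhoods collapse, so strictly speaking you should either index the family by $X$ or appeal to the bipartite (rather than set-system) phrasing of Hall, as the paper does; this is cosmetic and does not affect correctness.
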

\begin{proof}
Create a bipartite graph $G'$ with parts $X'$ and $Y$ from $G$ by replacing each $x \in X$ with $\eta(x)$ identical copies of $x$.  By assumption, $\card{N_{G'}(X')} = \card{N_G(X)} \geq \card{X'}$.  Hence we can choose $\emptyset \neq C \subseteq N_{G'}(X')$ minimal such that $\card{N_{G'}(C)} \leq \card{C}$.  If $\card{C} = 1$, then we clearly have a matching $M$ of $C$ into $N_{G'}(C)$.  Otherwise, by minimality of $C$, for every $\emptyset \neq D \subset C$, we have $\card{N_{G'}(D)} > \card{D}$ and hence $\card{N_{G'}(C)} = \card{C}$; now applying Hall's theorem (for bipartite graphs) gives a matching $M$ of $C$ into $N_{G'}(C)$. Since all copies of $x \in X$ have the same neighborhood, we see that a copy of $x \in X$ is in $N_{G'}(C)$ iff all copies of $x$ are.  For $x \in X$, let $O_x$ be the set of copies of $x$ in $X'$ and let $H$ be the graph with vertex set $\setbs{O_x}{O_x \subseteq N_{G'}(C)} \cup C$ and edge set $\setbs{O_xy}{zy \in M \text{ for some $z \in O_x$}}$.  Then $H$ is (isomorphic to) a subgraph of $G$ with the desired properties.
\end{proof}

\begin{thm}\label{MainTheoremMulti}
In a set system $\S$ with $\bigcup \S \subseteq P$ and $\card{P} \geq \sum_{S \in \S}
\eta(S)$, $\F$ has a winning strategy against $\B_t$ in the $\eta$-game iff $\card{\bigcup
\W} \geq \sum_{W \in \W}
\eta(W) - \nu_t(\W)$ for each $\W \subseteq \S$.
\end{thm}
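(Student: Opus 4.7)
My plan is to mirror the proof of Theorem~\ref{MainTheorem} line for line, making three substitutions throughout: replace $\card{\W}$ by $\sum_{W \in \W} \eta(W)$ wherever it occurs, replace the invocation of Hall's theorem for bipartite graphs by Lemma~\ref{Spanner}, and replace transversals by $\eta$-transversals. The invariant controlling $\B_t$'s play becomes $\card{\bigcup \W} < \sum_{W \in \W} \eta(W) - \nu_t(\W)$; since $\nu_t(\W) \geq 0$, this still obstructs an $\eta$-transversal of $\W$.

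For necessity I would take $\W \subseteq \S$ violating the hypothesis and go through the same move-by-move case analysis as in Theorem~\ref{MainTheorem}: the split on whether $S \in \W$, on whether $d_\W(x) = 0$, and on how $d_{\W_2}(y)$ compares to $d_{\W_2}(x) - 1$ is unchanged, because none of $\sum_{W \in \W} \eta(W)$, $\card{\bigcup \W}$, or $\nu_t(\W)$ transforms any differently under an $\F$-move than the corresponding quantities in the single version do. Hence $\B_t$'s responses carry over verbatim and the invariant is maintained forever.

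For sufficiency I would choose a counterexample $\S$ first minimizing $\card{\S}$ and subject to that maximizing $\card{\bigcup \S}$. If $\card{\bigcup \S} \geq \sum_{S \in \S} \eta(S)$, I apply Lemma~\ref{Spanner} to the bipartite graph $G$ with parts $\S$ and $\bigcup \S$ whose edges encode membership, producing a subgraph $H$. Setting $\W_C \DefinedAs \S \cap V(H)$ and $C \DefinedAs \bigcup \S \cap V(H)$, the edges of $H$ give an $\eta$-transversal of $\W_C$ with image contained in $C$, and the closure clause $N_G(C) \subseteq V(H)$ says that no set of $\S \smallsetminus \W_C$ contains any element of $C$; a degree count in $H$ also yields $\sum_{S \in \W_C} \eta(S) = \card{C}$. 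Then $\S' \DefinedAs \S \smallsetminus \W_C$ with pot $P' \DefinedAs P \smallsetminus C$ still satisfies every hypothesis, and the $\eta$-transversal of $\W_C$ survives the whole sub-game because $\F$ plays only from $P'$ and $\B_t$'s swap involves only the two elements $\F$ just touched, neither of which lies in $C$. Minimality of $\card{\S}$ gives a contradiction. Otherwise $\card{\bigcup \S} < \sum_{S \in \S} \eta(S)$, which via the hypothesis on $\W = \S$ forces $\nu_t(\S) \geq 1$, and from there the swap argument of Case~2 of Theorem~\ref{MainTheorem} carries over verbatim: pick $y \in P \smallsetminus \bigcup \S$ and $x$ with $d_\S(x) \geq t+2$, have $\F$ insert $y$ and remove $x$ from some $S \ni x$, and verify that the system $\S_3$ produced after $\B_t$'s response still satisfies the hypothesis and has $\card{\bigcup \S_3} > \card{\bigcup \S}$, contradicting maximality.

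The main new ingredient relative to Theorem~\ref{MainTheorem} is Lemma~\ref{Spanner}, and specifically its closure conclusion $N_G(Y \cap V(H)) \subseteq V(H)$: this is exactly what lets the peeled-off $\eta$-transversal of $\W_C$ resist any later interference by $\B_t$ in the sub-game, which is the only nontrivial departure from the proof of Theorem~\ref{MainTheorem}. Everything else is a bookkeeping translation between $\card{\W}$ and $\sum_{W \in \W} \eta(W)$.
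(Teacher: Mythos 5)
Your proposal tracks the paper's proof essentially line for line: the necessity part is an unchanged replay of the move-by-move analysis with $\card{\W}$ replaced by $\sum_{W\in\W}\eta(W)$, and the sufficiency part uses the same minimal counterexample, the same dichotomy on $\card{\bigcup\S}$ versus $\sum_{S\in\S}\eta(S)$, Lemma~\ref{Spanner} in place of the direct Hall argument, and the same final swap by $\F$. Your added observation that $\sum_{S\in\W_C}\eta(S)=\card{C}$ (to check $\card{P'}\geq\sum_{S\in\S'}\eta(S)$) and the explicit reasoning about why $\B_t$ cannot disturb the peeled-off $\eta$-transversal are both implicit in the paper, so there is no real deviation.
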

\begin{proof}
First we prove necessity of the condition. We note that the proof of necessity is identical to that in Theorem \ref{MainTheorem} aside from changing the invariant we are maintaining.  Suppose we have $\W \subseteq
\S$ with $\card{\bigcup\W} < \sum_{W \in \W} \eta(W) - \nu_t(\W)$.  We show that no matter what moves
$\F$ makes, we can maintain this invariant.  In particular, we
will always have $\card{\bigcup\W} < \sum_{W \in \W} \eta(W)$ and hence $\W$ can never have an
$\eta$-transversal. Suppose $\F$ modifies $S \in \S$ by inserting $x$ and removing $y$
to get $S'$.  Put $\W_2 \DefinedAs \W \cup \set{S'} \smallsetminus \set{S}$.
First, if $S \not \in \W$, then $\B_t$ does nothing. So, we may assume $S \in
\W$.  Suppose $d_{\W}(x) > 0$.  Then $\card{\bigcup \W_2} \leq \card{\bigcup \W}$.  So, if $\nu_t(\W_2) = \nu_t(\W)$, the invariant is maintained.  Otherwise, $d_{\W_2}(x) - 1$ must be a multiple of $t+1$ and $d_{\W_2}(y)$ must not be a multiple of $t+1$; in particular, $d_{\W_2}(y) \neq d_{\W_2}(x) - 1$.  If $d_{\W_2}(y) < d_{\W_2}(x) - 1$ then $\B_t$ swaps $y$ in for one $x$ in $\W_2 \smallsetminus \set{S'}$ restoring the status quo.  Otherwise, we must have $d_{\W_2}(y) > d_{\W_2}(x) - 1$ and $\B_t$ swaps $x$ in for $y$ in $\min\set{t, d_{\W_2}(y) + 1 - d_{\W_2}(x)}$ sets of $\W_2$, decreasing $\nu_t(\W_2)$.

So, we may assume that $d_{\W}(x) = 0$ and hence $\card{\bigcup \W_2} = \card{\bigcup \W} + 1$. 
Now $\B_t$ swaps $x$ in for $y$ in $\min\set{t, d_{\W_2}(y)}$ sets of $\W_2$ to form $\W_3$.  
If $d_{\W_2}(y) \leq t$, then $d_{W_3}(y) = 0$ and we have $\card{\bigcup \W_3}
= \card{\bigcup \W}$ and the invariant is maintained.  Otherwise $\nu_t(\W_3) <
\nu_t(\W)$ and again the invariant is maintained.

Now we prove sufficiency.  Suppose the condition is not sufficient for $\F$ to
have a winning strategy and choose a counterexample $\S$ first minimizing
$\card{\S}$ and subject to that maximizing $\card{\bigcup \S}$. 

First, suppose $\card{\bigcup \S} \geq \sum_{S \in \S} \eta(S)$.  Let $G$ be the bipartite graph with parts $\S$ and $\bigcup \S$ and an edge from $S \in \S$ to $y \in \bigcup \S$ iff $y \in S$.  Apply Lemma \ref{Spanner} to get a subgraph $H$ of $G$ such that $d_H(S) = \eta(S)$ for each $S \in \S \cap V(H)$, $d_H(y) = 1$ for each $y \in \bigcup \S \cap V(H)$ and $N_G(\bigcup \S \cap V(H)) \subseteq V(H)$.  Then $\func{f}{\S \cap V(H)}{\P\parens{\bigcup \S}}$ defined by $f(S) \DefinedAs N_H(S)$ is an $\eta$-transversal of $\S \cap V(H)$ with $\bigcup \im(f) = \bigcup \S \cap V(H)$. Put $\S' \DefinedAs \S \smallsetminus V(H)$ and $P' \DefinedAs P \smallsetminus V(H)$. Then the conditions of the theorem are satisfied with $\S'$ and $P'$ and if $\F$ plays on $\S'$ and $P'$, $\B_t$ cannot destroy the transversal of $\S \cap V(H)$ that exists using elements of $\bigcup \S \cap V(H)$, even though $\B_t$ may play on all of $\S$ (though still restricted to $P'$). Whence minimality of $\card{\S}$ gives a contradiction.

Therefore we must have $\card{\bigcup \S} < \sum_{S \in \S} \eta(S)$ and hence $\nu_t(\S) \geq
1$. Since $\card{P} \geq \sum_{S \in \S} \eta(S)$, we have $y \in P$ with $d_{\S}(y) = 0$.  So, we may choose $x \in P$
with $d_{\S}(x) \geq t + 2$. Now $\F$ should swap $y$ in for $x$ in some $S \in
\S$ to form $\S_2$.  Since $d_{\S}(y) = 0$, we have $\card{\bigcup \S_2} >
\card{\bigcup \S}$.  We also have $d_{\S_2}(x) \geq t + 1$.  Now $\B_t$ moves
and creates $\S_3$. Then $d_{\S_3}(x) \geq d_{\S_2}(x) - t > 0$, so we have 
$\card{\bigcup \S_3} > \card{\bigcup \S}$.  Suppose our modifications changed
some $\W \in \S$ so it now violates the hypotheses, let $\W_3$ be $\W$
after the two player's moves.  Then more precisely, we mean $\card{\bigcup \W_3} < \sum_{W \in \W_3}
\eta(W) - \nu_t(\W_3)$. 
Since $d_{\S}(y) = 0$, $\card{\bigcup \W_3} \geq \card{\bigcup \W}$.  Thus
$\nu_t(\W_3) < \nu_t(\W)$ and hence $d_{\W_3}(x) < d_{\W}(x)$.  But then
$d_{\W_3}(y) > 0$ and $\card{\bigcup \W_3} > \card{\bigcup \W}$, a
contradiction.  Therefore, $\S_3$ satisfies the hypotheses of the theorem and
hence $\F$ can win by maximality of $\card{\bigcup \S}$.
\end{proof}

\section{The fan equation}\label{FanSection}

\begin{proof}[Proof of Corollary \ref{FanCor}]
Put $k \DefinedAs \chi'(G) - 1$.  Consider a $k$-edge-coloring $\pi$ of $G - xy$.  For $v \in N(x)$, let
$M_v \subseteq \irange{k}$ be those colors not incident to $v$ under $\pi$ and let $D_v$ be the colors on the edges from $x$ to $v$.  Then the $D_v$ are
pairwise disjoint, $\card{D_v} = \mu(xv)$ for $v \in N(x) - \set{y}$ and
$\card{D_y} = \mu(xy) - 1$.  For $v \in N(x)$, put $S_v \DefinedAs M_v \cup
D_v$. Then $\card{S_v} = \card{D_v} + \card{M_v} = k + \mu(xv) - d(v)$.  

Now we translate the problem into our game.  Put $\eta(S_v) \DefinedAs \mu(xv)$. 
If $v \in N(x)$ and $a \in S_v$ and $b \not \in S_v$ we may exchange colors on a maximum length path in $G-x$ starting at $v$ and alternating between colors $b$ and $a$. 
This gives an $\F$ move followed by a $\B_1$ move in the $\eta$-game with sets
$\S_{N(x)}$ where $\S_X \DefinedAs \setbs{S_v}{v \in X}$ for $X \subseteq N(x)$. Plainly, if $\F$ has a winning
strategy in this $\eta$-game against $\B_1$, then we can extend the $k$-edge-coloring to all of $G$ giving a contradiction.  

Therefore, by Theorem \ref{MainTheoremMulti}, we must have $X \subseteq N(x)$
with $\card{\bigcup_{v \in X} S_v} < \sum_{v \in X} \eta(S_v) - \nu_1(\S_X) = \sum_{v
\in X} \mu(xv) - \nu_1(\S_X)$.  Since the $D_v$ are pairwise disjoint, we have
$\card{\bigcup_{v \in X} S_v} \geq -1 + \sum_{v \in X} \mu(xv)$ with equality
only if $y \in X$.  Hence $y \in X$ and $\nu_1(\S_X) = 0$.  Since $\chi' \geq
\Delta + 1$, we have $\card{S_y} = k + \mu(xy) - d(y) \geq \mu(xy)$ and hence
$\card{X} \geq 2$.  Since $\nu_1(\S_X) = 0$, each color is in at most two
elements of $\S_X$.  Therefore $\sum_{v \in X} \parens{k + \mu(xv) - d(v)} =
\sum_{v \in X} \card{S_v} \leq 2 \card{\bigcup_{v \in X} S_v} \leq -2 + 2\sum_{v \in X} \mu(xv)$.  The corollary follows.
\end{proof}

\section{Acknowledgements}
\noindent Thanks to the anonymous referees for helping to improve the readability of the paper.

\bibliographystyle{amsplain}
\bibliography{hall}
\end{document}